\documentclass[preprint,12pt]{elsarticle}%
\usepackage{amsmath,amsthm,amsfonts,amssymb,amscd}
\usepackage[T1]{fontenc}
\usepackage{lmodern}
\usepackage[dvipsnames]{xcolor}
\usepackage{enumerate}%
\usepackage{aliascnt}
\usepackage{hyperref}
\usepackage[nameinlink,noabbrev]{cleveref}
\usepackage{bm}

\newtheorem{thm}{Theorem}
\newaliascnt{prop}{thm}
\newtheorem{prop}[prop]{Proposition}
\aliascntresetthe{prop}

\newaliascnt{cor}{thm}

\aliascntresetthe{cor}

\newaliascnt{lem}{thm}
\newtheorem{lem}[lem]{Lemma}
\aliascntresetthe{lem}

\newaliascnt{example}{thm}
\newtheorem{example}[example]{Example}
\aliascntresetthe{example}

\crefname{thm}{theorem}{theorems}
\Crefname{thm}{Theorem}{Theorems}

\crefname{prop}{proposition}{propositions}
\Crefname{prop}{Proposition}{Propositions}

\crefname{cor}{corollary}{corollaries}
\Crefname{cor}{Corollary}{Corollaries}

\crefname{lem}{lemma}{lemmas}
\Crefname{lem}{Lemma}{Lemmas}

\crefname{example}{example}{examples}
\Crefname{example}{Example}{Examples}

\numberwithin{equation}{section}

\newcommand{\lp}{{\ell^p\to\ell^p}}

\begin{document}

\begin{frontmatter}

\title{Norm of infinite doubly stochastic matrices}


\author[label1]{Ludovick Bouthat}
\ead{ludovick.bouthat.1@ulaval.ca}

\author[label1]{Javad Mashreghi}
\ead{Javad.Mashreghi@mat.ulaval.ca}

\author[label2]{Raphaël Vo}
\ead{raphael.vo@mail.mcgill.ca}

\affiliation[label1]{organization={Université Laval},
            addressline={2325 Rue de l'Université},
            city={Québec},
            postcode={G1V 0A6},
            state={QC},
            country={Canada}}

\affiliation[label2]{organization={McGill University},
            addressline={845 Sherbrooke St W},
            city={Montreal},
            postcode={H3A 0G4},
            state={QC},
            country={Canada}}

\begin{abstract}
In finite dimensions, every doubly stochastic matrix has the $\ell^p$-operator norm equal to $1$ for all $1 \le p \le \infty$. However, in the infinite-dimensional setting, this property may fail since the norm can be strictly smaller than $1$ when $1<p<\infty$. In this paper, a complete characterization of infinite doubly stochastic matrices for which the norm remains equal to $1$ is obtained. More precisely, for $1<p<\infty$, it is shown that
$$
\|D\|_{\ell^p(I)\to\ell^p(I)}=1 \quad\iff\quad \Theta(D^*D)=1,
$$
where $\Theta$ measures the maximal average mass of a finite square submatrix. Thus, the norm is equal to $1$ precisely when the matrix contains arbitrarily large finite regions in which it behaves almost like a finite doubly stochastic matrix. The proof uses a Cheeger-type argument, highlighting a natural connection with ideas from spectral graph theory.
\end{abstract}

\begin{keyword}
Doubly stochastic matrices \sep operator norms \sep Cheeger inequality

\MSC[2020] Primary 47A30 \sep Secondary 15B51 \sep 47B65 \sep 05C50.


\end{keyword}

\end{frontmatter}

\section{Introduction}
Among the many families of matrices that arise naturally in matrix analysis, \emph{doubly stochastic matrices} occupy a significant place in theory and applications. By definition, these are square matrices $D=[d_{ij}]$ with nonnegative \newpage\noindent entries whose rows and columns each sum to 1, that is,\vspace{-3pt}
$$
d_{ij} \geq 0 
\qquad\mbox{and}\qquad  
\sum_{k=1}^n d_{ik} = \sum_{k=1}^n d_{kj} = 1,\vspace{-3pt}
$$
for all $i,j=1,2,\dots,n$. Throughout this paper, we denote by $\Omega_n$ the set of $n\times n$ doubly stochastic matrices. The celebrated Birkhoff's theorem asserts that every $n \times n$ doubly stochastic matrix can be written as a convex combination of $n \times n$ permutation matrices, and the permutation matrices are precisely the extreme points of $\Omega_n$. As such, the set of $n \times n$ doubly stochastic matrices is often referred to as the \textit{Birkhoff polytope}.

Since they were first studied around the 1950s, the theory of doubly stochastic matrices has been intrinsically used in the theory of majorization \cite{MarshallOlkinBarry2011}, enumerative combinatorics \cite{Athanasiadis2005,Stanley1999}, and optimization \cite{Barvinok2003,Fiedler1988,Pak2000,Tinhofer1986}, among other things. In particular, its close connection to the theory of majorization is captured in the well-known theorem of Hardy, Littlewood, and Polya. Recall that if $\bm{x},\bm{y} \in \mathbb{R}^n$, then $\bm{x}$ \emph{majorizes} $\bm{y}$, and we write $\bm{y} \prec \bm{x}$, if\vspace{-3pt}
$$
\sum_{i=1}^k y_i^{\downarrow} \leq \sum_{i=1}^k x_i^{\downarrow}, \qquad i=1,2,\dots,n,\vspace{-3pt}
$$
with equality when $k=n$. Here, $z_1^{\downarrow} \geq z_2^{\downarrow} \geq \cdots \geq z_n^{\downarrow}$ denotes the \emph{decreasing arrangement} of the entries of the vector $\bm{z} \in \mathbb{R}^n$. Intuitively, majorization means that $\bm{y}$ is more evenly distributed than $\bm{x}$, and thus plays an important role in econometrics to measure the distribution of wealth in a population.

The Hardy--Littlewood-Polya theorem states that for given vectors $\bm{x},\bm{y} \in \mathbb{R}^n$, we have $\bm{y} \prec \bm{x}$ if and only if there exists a doubly stochastic matrix $D\in\Omega_n$ such that $\bm{y}= D\bm{x}$ \cite{HLP}. The operators associated with the linear map $\bm{x} \mapsto D\bm{x}$ for a given doubly stochastic matrix $D\in\Omega_n$ are thus essentially those that reduce inequality in the distribution of the entries of a vector.

In light of the natural interpretation of doubly stochastic matrices as operators, one may be interested in their \emph{operator norm}. Due to the interpretation of doubly stochastic matrices as convex combinations of permutation matrices given by Birkhoff's theorem, we naturally consider \emph{permutation invariant} operator norms, namely norms for which $\|PDQ\| = \|D\|$ for all permutation matrices $P$ and $Q$. One family of such norms is the $\ell^p$ operator norms\vspace{-3pt}
$$
\|D\|_{\ell^p\to \ell^p} := \sup_{\bm{x}\neq 0} \frac{\|D\bm{x} \|_p}{\|\bm{x}\|_p},\vspace{-3pt}
$$
where $\|\bm{x}\|_p := ( |x_1|^p + \cdots + |x_n|^p )^{1/p}$ is the vector $p$-norm and $p\in[1,\infty]$.

These norms have been used for the study of doubly stochastic matrices in different contexts in the literature. For instance, Bahrami, Eshkaftaki, and Manjegani used $\ell^p$ norms on infinite sequences to extend the notion of majorization on $\ell^p(I)$, where $I$ is assumed to be an infinite set and $p\in[1,\infty)$ \cite{BahramiEshkaftakiManjegani}. In particular, they characterized the structure of the bounded linear maps on this space which preserve majorization. In another vein of research, Bouthat, Mashreghi, and Morneau-Guérin used the $\ell^p$ operator norms to study the geometry of the set $\Omega_n$. More specifically, they studied the size, radius, and center (in the Chebyshev sense) of $\Omega_n$ for all $p\in[1,\infty]$ and showed, among other things, that its center is always $J_n$, the $n\times n$ matrix whose entries are all equal to $1/n$ \cite{Bouthat}.

Moreover, because of Birkhoff's theorem and the permutation invariance of the norm, they proved that the $\ell^p$ operator norm of every doubly stochastic matrix is always equal to $1$. Indeed, observe that for any doubly stochastic matrix, we have $D\bm{e_n} = \bm{e_n}$, where $\bm{e_n}\in\mathbb{R}^n$ is the all-ones vector. Hence,
$$
\|D\|_{\ell^p\to \ell^p} = \sup_{\bm{x}\neq 0} \frac{\|D\bm{x} \|_p}{\|\bm{x}\|_p} \geq \frac{\|D\bm{e_n} \|_p}{\|\bm{e_n}\|_p} = \frac{\|\bm{e_n} \|_p}{\|\bm{e_n}\|_p} = 1.
$$
Then it suffices to show that $\|D\|_{\lp} \leq 1$. Let $D=\sum_{i=1}^r \alpha_i P_i$ be a Birkhoff decomposition of $D$.  Since the operator norm from $\ell^p_n$ to $\ell^p_n$ is permutation-invariant, we have $\|P\|_{\lp} = \|I\|_{\lp} = 1$ for any permutation matrix~and
\begin{align*}
\|D\|_{\lp} = \bigg\| \sum_{i=1}^r \alpha_i P_i \bigg\|_{\lp} \!\leq \sum_{i=1}^r \alpha_i \|P_i\|_{\lp} = \sum_{i=1}^r \alpha_i   = 1.
\end{align*}

Due to the consideration raised in \cite{BahramiEshkaftakiManjegani}, Eshkaftaki recently sought to study the identity $\|D\|_{\lp} = 1$ in an infinite setting \cite{Eshkaftaki} by considering the $\ell^p$ operator norm of \emph{infinite doubly stochastic operator}, which is a nonnegative semi-infinite matrix
$$
D = 
\begin{bmatrix}
d_{11}&d_{12}&d_{13}&\cdots\\
d_{21}&d_{22}&d_{23}&\cdots\\
d_{31}&d_{32}&d_{33}&\cdots\\
\vdots&\vdots&\vdots&\ddots
\end{bmatrix}
$$
such that $\sum_{k=1}^\infty d_{ik} = \sum_{k=1}^\infty d_{kj}=1$ for any $i,j\geq 1$. More precisely, he considered doubly stochastic matrices indexed in a set $I$ (which can be finite, countably infinite, or even uncountably infinite), which are matrices whose entries $d_{ij} \geq 0$ are indexed by $(i,j)\in I\times I$ and for which $\sum_{k\in I} d_{ik} = \sum_{k\in I} d_{kj}=1$ for all $i,j\in I$.

Eshkaftaki notably proved that any such matrix defines an operator from $\ell^p(I) \to \ell^p(I)$ for any $1\leq p \leq \infty$. As such, we will avoid the term semi-infinite matrix and simply use the terminology \emph{doubly stochastic operator} (on $\ell^p(I)$) to refer to the matrix $D$.

In addition to proving that any doubly stochastic operator is an operator of $\ell^p(I) \to \ell^p(I)$ for any $1\leq p \leq \infty$, Eshkaftaki also established that its norm is bounded above by $1$, that is, $\|D\|_{\ell^p(I) \to \ell^p(I)}\leq 1$. Moreover, the author proved that for any $1<p<\infty$, there exist infinite doubly stochastic matrices for which the norm is strictly smaller than $1$, and, in fact, the norm can be as close to zero as we wish. This contrasts sharply with the finite case, where all doubly stochastic matrices have a norm precisely equal to $1$. In his work, Eshkaftaki only provided one example of an infinite doubly stochastic matrix $D$ with $\|D\|_{\ell^p(I) \to \ell^p(I)}< 1$. Naturally, one may wonder when exactly the norm of an infinite doubly stochastic is strictly smaller than $1$. It is precisely this question that is answered in this paper, as we provide a complete characterization of these operators.

The structure of the paper is as follows. In \Cref{S:main} we present the main result in \Cref{T:Main}.  The proof of this theorem is based on several lemmas and one crucial inequality, which has its roots in spectral graph theory. This role is discussed in \Cref{S:cheeger}, where we present a Cheeger-type coarea lemma.

\Cref{S:technical-lemmas} contains several technical lemmas which are needed for the proof of the main result. In \Cref{S:proof-main} we use them to prove \Cref{T:Main}. Moreover, we also record an exact quantitative refinement showing that while $\Theta(D^*D)$ alone does not determine the exact $\ell^2$-norm, the sequence $\Theta((D^*D)^n)$ does, and we show that the quantity $\Theta(D^*D)$ cannot be replaced by $\Theta(D)$.

\section{Main results} \label{S:main}
The following theorem provides a complete characterization of doubly stochastic operators for which the $\ell^p(I)$ operator norm is equal to 1. To state the result, we consider the quantity $\Theta(D)$ defined by
\begin{equation*}
    \Theta(D):= \sup_{S\subseteq I} \frac{1}{|S|} \sum_{i,j\in S} d_{ij},
\end{equation*}
where $S$ runs over the finite subsets of $I$. Here, $|S|$ is the cardinality of $S$.

\begin{thm}\label{thm - main} \label{T:Main}
    Let $1<p<\infty$, and let $D$ be a doubly stochastic operator. Then $\|D\|_{\ell^p(I) \to\ell^p(I)} = 1$ if and only if $\Theta(D^*D)=1$.
\end{thm}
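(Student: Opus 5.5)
The plan is to reduce everything to $p=2$ and then run a Cheeger-type argument on the doubly stochastic operator $M:=D^*D$. Note first that $M$ is itself doubly stochastic: its entries are nonnegative, and each row and column sums to $1$ by Tonelli. It is also symmetric and positive semidefinite, with $\|M\|_{\ell^2\to\ell^2}=\|D\|_{\ell^2\to\ell^2}^2$.

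\emph{Reduction from $p$ to $2$.} Eshkaftaki's bound gives $\|D\|_{\ell^1\to\ell^1}\le 1$ and $\|D\|_{\ell^\infty\to\ell^\infty}\le 1$, and every $\|D\|_{\ell^p\to\ell^p}\le 1$. Riesz--Thorin interpolation between $1$ and $p$ (if $p>2$) or between $p$ and $\infty$ (if $p<2$) gives $\|D\|_{2}\le \|D\|_p^{\theta}$ for some $\theta\in(0,1]$. Hence $\|D\|_p<1$ implies $\|D\|_2<1$. Conversely, interpolating $p$ between $2$ and $1$ or $\infty$ gives $\|D\|_p\le\|D\|_2^{\theta'}$ with $\theta'>0$. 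So $\|D\|_p=1$ if and only if $\|D\|_2=1$, which is equivalent to $\|M\|_2=1$, i.e.\ $\sup_{\|x\|_2=1}\langle Mx,x\rangle=1$. It therefore suffices to show that this supremum equals $1$ exactly when $\Theta(M)=1$.

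\emph{Easy direction.} If $\Theta(M)=1$, take finite sets $S$ with $\frac1{|S|}\sum_{i,j\in S}m_{ij}\to 1$ and test against $x=|S|^{-1/2}\mathbf 1_S$. Then $\langle Mx,x\rangle=\frac1{|S|}\sum_{i,j\in S}m_{ij}\to1$, so $\|M\|=1$.

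\emph{Hard direction (Cheeger step).} Suppose $\Theta(M)\le 1-\varepsilon$, i.e.\ for every finite $S$ the boundary mass satisfies $\sum_{i\in S,\,j\notin S}m_{ij}=|S|-\sum_{i,j\in S}m_{ij}\ge\varepsilon|S|$; this is an isoperimetric constant $h\ge\varepsilon$. By density and $\langle Mx,x\rangle\le\langle M|x|,|x|\rangle$, we may take $x\ge0$ finitely supported with $\|x\|_2=1$. Using double stochasticity,
\[
1-\langle Mx,x\rangle=\tfrac12\sum_{i,j}m_{ij}(x_i-x_j)^2 .
\]
We then apply the Cheeger coarea argument to $y=x^2$. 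The coarea formula gives
\[
\sum_{i,j}m_{ij}|x_i^2-x_j^2| = 2\int_0^\infty \sum_{i\in S_t,\,j\notin S_t}m_{ij}\,dt \ge 2\varepsilon\int_0^\infty|S_t|\,dt=2\varepsilon\|x\|_2^2=2\varepsilon,
\]
where $S_t=\{x_i^2>t\}$ is finite. On the other hand, Cauchy--Schwarz gives
\[
\sum m_{ij}|x_i^2-x_j^2|\le\Big(\sum m_{ij}(x_i-x_j)^2\Big)^{1/2}\Big(\sum m_{ij}(x_i+x_j)^2\Big)^{1/2}\le \Big(2(1-\langle Mx,x\rangle)\Big)^{1/2}\cdot 2,
\]
since $\sum m_{ij}(x_i+x_j)^2\le 2\sum m_{ij}(x_i^2+x_j^2)=4$. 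Combining the two bounds yields $1-\langle Mx,x\rangle\ge \varepsilon^2/2$, so $\|M\|\le 1-\varepsilon^2/2<1$.

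The main obstacle I expect is this Cheeger step: handling the infinite index set carefully (finiteness of level sets, summability in the coarea identity) and getting the constants right. Note that the Cheeger argument bounds only the top of the spectrum of $M$, not $\|M\|$ itself. This is exactly why $M=D^*D\ge0$, rather than $D$, is needed: positivity means the top of the spectrum is the norm, so we need not control $-1$ in the spectrum. The interpolation reduction is routine but must also be stated cleanly.
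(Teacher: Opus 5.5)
Your proposal is correct and follows essentially the paper's route: a Riesz--Thorin reduction to $p=2$, passage to $M=D^*D$, testing on $|S|^{-1/2}\mathbf 1_S$ for the easy direction, and the coarea estimate on $x^2$ combined with Cauchy--Schwarz for the hard direction. The differences are cosmetic: you evaluate $\langle M\bm{e}_S,\bm{e}_S\rangle$ directly instead of using $\|M\bm{e}_S-\bm{e}_S\|_2^2\le 2\Phi_S(M)$, and you bound $\sum m_{ij}(x_i+x_j)^2\le 4$ rather than computing it exactly as $2(1+\langle Mx,x\rangle)$, which gives $\|M\|\le 1-\varepsilon^2/2$ instead of the paper's sharper $\sqrt{1-\Phi^2(M)}$ but is still enough for the equivalence.
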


In other words, the theorem shows that the only feature that prevents an infinite doubly stochastic matrix from being a strict contraction on $\ell^p(I)$ is the existence of finite subsets in $I$ on which $D^*D$ acts almost as a finite doubly stochastic matrix. More explicitly, the condition $\Theta(D^*D)=1$ means that one can find finite sets $S\subseteq I$ such that $D^*D$ retains almost all of its mass inside $S$, on average. In this sense, the theorem says that $\|D\|_{\ell^p(I)\to\ell^p(I)}=1$ exactly when $D^*D$ admits arbitrarily good finite approximations to the norm-preserving behavior of the finite-dimensional case. Otherwise, some positive proportion of the mass always escapes, forcing the operator norm to be strictly smaller than $1$.

\section{A Cheeger-type coarea estimate} \label{S:cheeger} 
Let $G=(V,E)$ be a graph with vertices $V$ and edges $E$. Let $S\subseteq V$ be a subset of vertices and define the \emph{edge boundary} of $S$, denoted by $\partial S$, to be the collection of all edges that go from a vertex in $S$ to a vertex outside of $S$. That is,
$$
\partial S:= \bigl\{\{x,y\}\in E\ :\ x\in S,\,y\in V\setminus S\bigr\}.
$$
Then the \emph{Cheeger constant} of $G$ is defined as
\begin{equation*}
    \Phi(G) := \inf \left\{{\frac {|\partial S|}{|S|}}\ :\ S\subseteq V,\, 0<|S|\leq {\tfrac {1}{2}}|V|\right\}.
\end{equation*}
Heuristically, this constant tells if the graph $G$ has a ``bottleneck'', and was one of the main motivations that started the field of spectral graph theory. In this work, we need the formulation of the constant in terms of the \emph{adjacency matrix} of $G$. Therefore, if $A$ is the adjacency matrix of the graph $G$, we write $\Phi(A)$ to mean $\Phi(G)$. In this setting, Cheeger's constant is given by
\begin{equation*}\label{eq - Phi}
    \Phi(A)  = \inf_{S \subseteq V} \frac{1}{|S|} \sum_{i\in S} \sum_{j\notin S} a_{ij} =: \inf_{S \subseteq V} \Phi_S(A),
\end{equation*}
where the infimum runs over the finite subsets of $I$. Note that we here defined the quantity $\Phi_S(A)$ that will be convenient. For an adjacency matrix, the entries of $A$ are only $0$ and $1$. However, Cheeger's constant may be naturally generalized to all matrices with entries indexed by $V\times V$. 

In this context, a generalized form of the classical \emph{Cheeger inequality} states that for all self-adjoint $A$, we have
\begin{equation*}\label{Cheeger inequality}
    \langle \bm{x},(I-A)\bm{x}\rangle_{2} \geq \frac{1}{2} \, \Phi^2(A) \,\|\bm{x}\|_{2}^2.
\end{equation*}
The above inequality can be found in \cite[Theorem 3.1]{MR930082}, and more specifically in the lower bound \cite[equation (3.18)]{MR930082}, where we used $L=I-A$, $\mu(i,j)=a_{ij}$, $K=0$ and $\pi$ as the counting measure. Although Theorem~3.1 is stated under the assumption that $\pi(S)<\infty$, the proof of equation (3.18) applies verbatim to the counting-measure case. 

Indeed, Cheeger's inequality is obtained by using a Cauchy--Schwarz along with the bound below, which is slightly sharper in our case. Hence, for we only provide here a proof for this relevant Cheeger-like bound.

\begin{lem}\label{Lawler & Sokal}
Let $A$ be a self-adjoint doubly stochastic operator over $\ell^2 (I)$. Then, for all $\bm{x}\in \ell^2 (I),$ 
\[
\sum_{i,j\in I} a_{ij} |x_i^2 - x_j^2|
        \geq
        2\,\Phi(A)\,\|\bm{x}\|_2^2.
\]
\end{lem}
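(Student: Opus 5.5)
The plan is the standard coarea (layer-cake) argument. Since $|x_i^2-x_j^2|\ge \bigl||x_i|^2-|x_j|^2\bigr|$ holds with equality for real $\bm{x}$, and in general the left side depends on $x_i^2$, I will assume $\bm{x}$ is real; otherwise I replace $\bm{x}$ by $|\bm{x}|$, and it suffices to treat nonnegative $\bm{x}$ with $f_i := x_i^2 \ge 0$ and $\sum_i f_i = \|\bm{x}\|_2^2<\infty$. For $t>0$ I introduce the level sets $S_t := \{i\in I : f_i > t\}$. Because $\sum f_i<\infty$, each $S_t$ with $t>0$ is finite, so $S_t$ is an admissible set in the definition of $\Phi(A)$ whenever it is nonempty.

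The key identity is that for nonnegative reals $|f_i - f_j| = \int_0^\infty |\mathbf 1_{f_i>t} - \mathbf 1_{f_j>t}|\,dt$. The integrand equals $1$ exactly when one of $i,j$ lies in $S_t$ and the other does not. Summing against $a_{ij}\ge 0$ and using Tonelli (everything is nonnegative) gives
\[
\sum_{i,j\in I} a_{ij}|f_i-f_j| = \int_0^\infty \Bigl(\sum_{i\in S_t}\sum_{j\notin S_t} a_{ij} + \sum_{i\notin S_t}\sum_{j\in S_t} a_{ij}\Bigr)\,dt .
\]
By self-adjointness, $a_{ij}=a_{ji}$, so the two inner sums coincide. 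Each is therefore $|S_t|\,\Phi_{S_t}(A)\ge |S_t|\,\Phi(A)$. (When $S_t=\emptyset$, both sides are $0$.) Hence the left side is at least $2\Phi(A)\int_0^\infty |S_t|\,dt$.

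Finally, the layer-cake formula gives $\int_0^\infty |S_t|\,dt = \sum_i \int_0^\infty \mathbf 1_{f_i>t}\,dt = \sum_i f_i = \|\bm{x}\|_2^2$, which yields the claimed inequality.

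The only delicate points are the following. First, the finiteness of the level sets: this requires $t>0$, but $t=0$ is a null set, so it is harmless. Second, the interchange of sum and integral, which is justified by nonnegativity. Third, the fact that the double series on the left may diverge, in which case the inequality is trivial. Double stochasticity is not really needed beyond $a_{ij}\ge0$ and symmetry; it is only what makes $\Phi(A)\le 1$ meaningful. I expect no serious obstacle. The step requiring the most care is the symmetric doubling, where the factor $2$ comes from using $a_{ij}=a_{ji}$ to combine the two boundary sums.
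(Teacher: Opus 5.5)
Your proposal is correct and follows essentially the same route as the paper. You pass from $x_i^2$ to $|x_i|^2$ by the reverse triangle inequality, write $\bigl||x_i|^2-|x_j|^2\bigr|$ as an integral over the finite superlevel sets $S_t$, interchange sum and integral by Tonelli, bound each boundary sum below by $|S_t|\,\Phi(A)$, and finish with the layer-cake identity $\int_0^\infty |S_t|\,\mathrm{d}t=\|\bm{x}\|_2^2$. The only cosmetic difference is where the symmetry $a_{ij}=a_{ji}$ enters: the paper uses it first to reduce to ordered pairs with $|x_i|^2>|x_j|^2$, while you keep both boundary sums $\sum_{i\in S_t}\sum_{j\notin S_t}$ and $\sum_{i\notin S_t}\sum_{j\in S_t}$ and merge them at the end.
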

\begin{proof}
    For $\alpha>0$, define the (finite) superlevel set
    $$
        S_\alpha=\{i\in I : |x_i|^2>\alpha\}.
    $$
    For every pair $i,j\in I$ such that $|x_i|^2>|x_j|^2$, we have
    $$
        |x_i|^2-|x_j|^2
        =
        \int_0^\infty 
        \mathbf{1}_{\{|x_i|^2>\alpha\geq |x_j|^2\}}\,\mathrm{d}\alpha .
    $$
    Therefore, by Tonelli's theorem and the triangle inequality,
    \begin{align*}
        \frac{1}{2}\sum_{i,j\in I} a_{ij} |x_i^2 - x_j^2 | &\geq \frac{1}{2}\sum_{i,j\in I} a_{ij} ||x_i|^2 - |x_j|^2| =\!\! \sum_{\substack{i,j\in I \\ |x_i|^2 > |x_j|^2}} \!\!a_{ij} \bigl(|x_i|^2 - |x_j|^2\bigr) \\
        &=
        \int_0^\infty
        \!\!\sum_{\substack{i,j\in I\\ |x_i|^2 > |x_j|^2}}\!
        a_{ij}\,\mathrm{d}\alpha =
        \int_0^\infty
        \sum_{i\in S_\alpha}\sum_{j\notin S_\alpha}
        a_{ij}\,\mathrm{d}\alpha \\
        &= \int_0^\infty \Phi_{S_\alpha}(A) |S_\alpha| \,\mathrm{d}\alpha \geq \Phi(A) \int_0^\infty |S_\alpha| \,\mathrm{d}\alpha .
    \end{align*}
    Again by Tonelli's theorem,
    $$
        \int_0^\infty |S_\alpha|\,\mathrm{d}\alpha
        =
        \int_0^\infty \sum_{i\in I}\mathbf{1}_{\{|x_i|^2>\alpha\}}\,\mathrm{d}\alpha
        =
        \sum_{i\in I}\int_0^\infty \mathbf{1}_{\{|x_i|^2>\alpha\}}\,\mathrm{d}\alpha
        =
        \sum_{i\in I} |x_i|^2 \geq \|\bm{x}\|_2^2.
    $$
    Therefore, we obtain
    \begin{equation*}\label{eq - proof3}
        \sum_{i,j\in I} a_{ij} |x_i^2 - x_j^2|
        \geq
        2\,\Phi(A)\,\|\bm{x}\|_2^2,
    \end{equation*}
    as desired.
\end{proof}

Now, as a final observation before going into the proof of \Cref{thm - main}, let us remark that if $D$ is a doubly stochastic operator with entries indexed in $I\times I$, then $1 = \sum_{j\in S} d_{ij} + \sum_{j\notin S} d_{ij}$ for all $S\subseteq I$ which implies that
\begin{equation}\label{eq - dual}
\begin{aligned}
    \Phi(D) &= \inf_{S \subseteq I} \frac{1}{|S|} \sum_{i\in S} \sum_{j\notin S}  d_{ij} = \inf_{S \subseteq I} \frac{1}{|S|} \sum_{i\in S} \bigg( 1-\sum_{j\in S} d_{ij} \bigg)\! \\&= 1- \sup_{S \subseteq I} \frac{1}{|S|}\sum_{i,j\in S} d_{ij} = 1-\Theta(D).
\end{aligned}
\end{equation}
This duality between $\Phi(D)$ and $\Theta(D)$ shall be used in the following to establish our main result.

\section{Some technical lemmas} \label{S:technical-lemmas}
The proof requires us to establish several lemmas. The first allows us to consider only the case of $p=2$.

\begin{lem}\label{lem - p=2}
Let $D$ be a doubly stochastic operator, and let $1<p<\infty$. Then
$$
\|D\|_{\ell^p(I)\to \ell^p(I)} =1 \quad\iff\quad  \|D\|_{\ell^2(I)\to \ell^2(I)} =1.
$$
\end{lem}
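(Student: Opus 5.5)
The plan is Riesz--Thorin interpolation, using the endpoint bounds $\|D\|_{\ell^1\to\ell^1}\le 1$ and $\|D\|_{\ell^\infty\to\ell^\infty}\le 1$. These follow directly from the column and row sums being $1$ (Eshkaftaki's bound recalled in the introduction). For real or complex scalars the Riesz--Thorin theorem gives, for $1/p_\theta=(1-\theta)/p_0+\theta/p_1$,
$$
\|D\|_{\ell^{p_\theta}\to\ell^{p_\theta}}\le \|D\|_{\ell^{p_0}\to\ell^{p_0}}^{1-\theta}\,\|D\|_{\ell^{p_1}\to\ell^{p_1}}^{\theta}.
$$
Every norm involved is at most $1$.

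($\Leftarrow$) Suppose $\|D\|_{2\to2}=1$ and fix $1<p<\infty$. Interpolate with $2$ strictly between $p$ and an endpoint: if $p<2$ take $p_0=p$, $p_1=\infty$; if $p>2$ take $p_0=1$, $p_1=p$; if $p=2$ there is nothing to prove. In either case $\theta\in(0,1)$, and the endpoint factor is at most $1$. For $p<2$ this gives $1=\|D\|_{2\to2}\le \|D\|_{p\to p}^{1-\theta}$, and since $1-\theta>0$ we get $\|D\|_{p\to p}\ge 1$. The case $p>2$ is symmetric, with exponent $\theta>0$ on $\|D\|_{p\to p}$. Combined with the upper bound $1$, this gives equality.

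($\Rightarrow$) Suppose $\|D\|_{p\to p}=1$ for some $1<p<\infty$, $p\neq 2$. Now $p$ must be placed strictly between $2$ and an endpoint: if $p<2$ take $p_0=1$, $p_1=2$; if $p>2$ take $p_0=2$, $p_1=\infty$. The interpolation exponent on $\|D\|_{2\to 2}$ is then strictly positive, so $1=\|D\|_{p\to p}\le\|D\|_{2\to2}^{\theta'}$ with $\theta'>0$. This forces $\|D\|_{2\to2}\ge1$, and hence $\|D\|_{2\to2}=1$.

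The main point to check is that Riesz--Thorin applies with the counting measure on an arbitrary, possibly uncountable, index set $I$. This holds because it is $\sigma$-finiteness-free for counting measure on finitely supported simple functions, and $D$ is determined by its action on finitely supported vectors, which are dense in $\ell^p(I)$ for $p<\infty$. For the $\ell^\infty$ endpoint, only the bound on finitely supported vectors is needed, which follows from the row sums. If one prefers real scalars, the complex and real norms of the nonnegative matrix $D$ coincide, since $|D\bm x|\le D|\bm x|$ entrywise, so nothing is lost. Beyond this, the argument is routine.
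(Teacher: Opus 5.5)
Your proposal is correct and is essentially the paper's proof. It uses the same four Riesz--Thorin pairs: $(p,\infty)$ for $p<2$ and $(1,p)$ for $p>2$ to bound $\|D\|_{\ell^2\to\ell^2}$ by a positive power of $\|D\|_{\ell^p\to\ell^p}$, and $(1,2)$, $(2,\infty)$ for the reverse direction, together with the endpoint bounds $\|D\|_{\ell^1\to\ell^1},\|D\|_{\ell^\infty\to\ell^\infty}\le 1$ and the a priori bound $\|D\|\le 1$. Your extra remarks on applying Riesz--Thorin over a possibly uncountable $I$ via finitely supported vectors, and on real versus complex scalars for a nonnegative matrix, are sound and make explicit what the paper leaves implicit.
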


\begin{proof}
Recall the classical Riesz--Thorin interpolation theorem, which states that if $p_0$ and $p_1$ are such that $1\leq p_0 < p_1 \leq \infty$, and $p_\theta$ satisfies $\frac{1}{p_\theta} = \frac{\theta}{p_0} + \frac{1-\theta}{p_1}$ for some $0 < \theta < 1$, then
$$
\|D\|_{\ell^{p_\theta}(I) \to \ell^{p_\theta}(I)} \leq \|D\|_{\ell^{p_0}(I) \to \ell^{p_0}(I)}^{\theta} \|D\|_{\ell^{p_1}(I) \to \ell^{p_1}(I)}^{1-\theta}.
$$
If $1< p < 2$, choose $p_0=p$, $p_1=\infty$ and $\theta = p/2$, so that $p_\theta = 2$. This yields
$$
\|D\|_{\ell^{2}(I) \to \ell^{2}(I)} \leq \|D\|_{\ell^{p}(I) \to \ell^{p}(I)}^{p/2} \|D\|_{\ell^{\infty}(I) \to \ell^{\infty}(I)}^{1-p/2} = \|D\|_{\ell^{p}(I) \to \ell^{p}(I)}^{p/2},
$$
where the last identity holds since $\|D\|_{\ell^{1}(I) \to \ell^{1}(I)}$ and $\|D\|_{\ell^{\infty}(I) \to \ell^{\infty}(I)}$ are always equal to $1$. Similarly, choosing $p_0=1$, $p_1=2$, and $\theta = 2/p-1$, so that $p_\theta=p$, produces
$$
\|D\|_{\ell^{p}(I) \to \ell^{p}(I)} \leq \|D\|_{\ell^{1}(I) \to \ell^{1}(I)}^{2/p-1} \|D\|_{\ell^{2}(I) \to \ell^{2}(I)}^{2-2/p}  = \|D\|_{\ell^{2}(I) \to \ell^{2}(I)}^{2-2/p}.
$$
Consequently, we have
$$
\|D\|_{\ell^{2}(I) \to \ell^{2}(I)}^{2/p} \leq \|D\|_{\ell^{p}(I) \to \ell^{p}(I)} \leq \|D\|_{\ell^{2}(I) \to \ell^{2}(I)}^{2-2/p},
$$
from which the claim follows directly. The case for $2\leq p <\infty$ is obtained similarly by first setting $p_0 = 1$, $p_1=p$ and $\theta=\frac{p-2}{2p-2}$, and then $p_0=2$, $p_1=\infty$ and $\theta=2/p$.
\end{proof}

Together with \Cref{lem - p=2}, the next lemma allows us to partially reduce the problem to the case of self-adjoint doubly stochastic operator on $\ell^2(I)$.

\begin{lem}\label{lem - self-adjoint}
Let $D$ be a doubly stochastic operator and set $A=D^*D$. Then $A$ is a self-adjoint doubly stochastic operator and 
\begin{enumerate}[(i)]
\item $\|A\|_{\ell^2(I)\to \ell^2(I)} = \|D\|_{\ell^2(I)\to \ell^2(I)}^2$,
\item $\Phi(A) \leq 2\Phi(D)$.
\end{enumerate}
\end{lem}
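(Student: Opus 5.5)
The plan is to work entrywise with $A=D^*D$ and reduce everything to finite sums of nonnegative terms, using Tonelli's theorem throughout. Since $D$ has real nonnegative entries, $D^*$ is the transpose matrix $D^T=[d_{ji}]$. This is itself doubly stochastic, because row and column sums are interchanged, and so it defines a bounded operator on $\ell^2(I)$ by Eshkaftaki's result.

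\textbf{Step 1: $A$ is a self-adjoint doubly stochastic operator.} The entries of the composition are
\[
a_{ij}=\sum_{k\in I} d_{ki}d_{kj}\ \ge 0 .
\]
The interchange of summation that identifies the operator $D^*D$ with this matrix product is justified by absolute convergence: all terms are nonnegative and $D\bm{x}\in\ell^2(I)$. Symmetry $a_{ij}=a_{ji}$ is evident, so $A$ is self-adjoint. By Tonelli,
\[
\sum_j a_{ij}=\sum_k d_{ki}\sum_j d_{kj}=\sum_k d_{ki}=1,
\]
and the column sums equal $1$ by symmetry.

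\textbf{Step 2: part (i).} This is the $C^*$-identity $\|D^*D\|=\|D\|^2$ for bounded operators on the Hilbert space $\ell^2(I)$. Concretely, $\|D\bm{x}\|_2^2=\langle \bm{x},A\bm{x}\rangle\le\|A\|\,\|\bm{x}\|_2^2$ gives one inequality, and $\|A\|\le\|D^*\|\,\|D\|=\|D\|^2$ gives the other.

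\textbf{Step 3: part (ii), the main computation.} Fix a finite $S\subseteq I$ and set
\[
u_k:=\sum_{i\in S} d_{ki}\in[0,1],
\]
the mass of row $k$ of $D$ lying in the columns $S$. Then $\sum_{j\notin S}d_{kj}=1-u_k$, and by Tonelli
\[
|S|\,\Phi_S(A)=\sum_{i\in S}\sum_{j\notin S}\sum_k d_{ki}d_{kj}=\sum_{k\in I}u_k(1-u_k).
\]
I split this sum over $k\in S$ and $k\notin S$. For $k\in S$ I bound $u_k(1-u_k)\le 1-u_k$, and for $k\notin S$ I bound $u_k(1-u_k)\le u_k$.

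The first part gives $\sum_{k\in S}(1-u_k)=|S|\,\Phi_S(D)$. For the second part, the column sums equal $1$, so
\[
\sum_{k\notin S}u_k=\sum_{i\in S}\sum_{k\notin S}d_{ki}=\sum_{i\in S}\Bigl(1-\sum_{k\in S}d_{ki}\Bigr)=|S|-\sum_{k\in S}u_k=|S|\,\Phi_S(D).
\]
Hence $\Phi_S(A)\le 2\Phi_S(D)$ for every finite $S$, and therefore $\Phi(A)\le\Phi_S(A)\le 2\Phi_S(D)$. Taking the infimum over $S$ gives $\Phi(A)\le 2\Phi(D)$.

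\textbf{Main obstacle.} The only step needing an idea is this splitting in Step 3. The key observation is that the "leakage out of $S$" of row $k$ and the "column leakage into $S$" coincide in total by double stochasticity. The rest is bookkeeping of nonnegative series.
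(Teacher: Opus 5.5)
Your proof is correct and follows the paper's argument essentially line by line: the same entrywise formula $a_{ij}=\sum_k d_{ki}d_{kj}$ with Tonelli for double stochasticity, the $C^*$-identity for (i), and for (ii) the same quantity $u_k=\sum_{i\in S}d_{ki}$ with the identical split $u_k(1-u_k)\le u_k$ for $k\notin S$ and $u_k(1-u_k)\le 1-u_k$ for $k\in S$. Your use of the column sums to show $\sum_{k\notin S}u_k=|S|\,\Phi_S(D)$ spells out an equality that the paper states without comment.
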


\begin{proof}
It is shown in \cite[Theorem 2.2]{Eshkaftaki} that every doubly stochastic operator is bounded on $\ell^2(I)$. Thus, $A=D^*D$ is well-defined and self-adjoint. We now show that $A$ is doubly stochastic. Writing $A=[a_{ij}]$, we have
$$
a_{ij}=\sum_{k\in I} d_{ki}d_{kj},
$$
so in particular $a_{ij}\ge 0$. Moreover, using Tonelli's theorem (all terms are nonnegative), for every $i\in I$,
$$
\sum_{j\in I} a_{ij}
=
\sum_{j\in I}\sum_{k\in I} d_{ki}d_{kj}
=
\sum_{k\in I} d_{ki} \cdot\sum_{j\in I} d_{kj}
=
1.
$$
Similarly, for every $j\in I$, we have $\sum_{i\in I} a_{ij} = 1.$ Hence $A$ is doubly stochastic.

\smallskip
\noindent (i): This is a well-known fact in operator theory. Briefly speaking, for every $\bm{x}\in \ell^2(I)$, we have
$$
\|D\bm{x}\|_2^2
=
\langle D\bm{x},D\bm{x}\rangle
=
\langle D^*D\bm{x},\bm{x}\rangle
=
\langle A\bm{x},\bm{x}\rangle.
$$
Therefore,
$$
\|D\|_{\ell^2(I)\to\ell^2(I)}^2
=
\sup_{\|\bm{x}\|_2=1}\|D\bm{x}\|_2^2
=
\sup_{\|\bm{x}\|_2=1}\langle A\bm{x},\bm{x}\rangle.
$$
Since $A=D^*D$ is positive and self-adjoint, it is standard that the right-hand side corresponds to the $\ell^2(I)$ operator norm of $A$, and thus
$$
\|D\|_{\ell^2(I)\to\ell^2(I)}^2=\|A\|_{\ell^2(I)\to\ell^2(I)}.
$$

\smallskip
\noindent (ii): Define $t_k:=\sum_{i\in S} d_{k i}\in[0,1]$ and observe that
\begin{equation*}\label{eq:boundary-M}
    \begin{aligned}
|S|\Phi_S(A) &=\sum_{i\in S,\,j\notin S} a_{ij} = \sum_{i\in S,\,j\notin S} \sum_{k\in I} d_{ki} d_{kj}
=\sum_{k\in I} t_k(1-t_k)
\\
&=\sum_{k\notin S} t_k(1-t_k)+\sum_{k\in S} t_k(1-t_k) \le \sum_{k\notin S} t_k + \sum_{k\in S}(1-t_k) \\
&= 2 \sum_{k\in S}(1-t_k) = 2\sum_{k\in S} \sum_{i\notin S} d_{ki} = 2|S|\Phi_S(D).
\end{aligned}
\end{equation*}
Dividing by $|S|$ and taking the infimum over $S$ yields $\Phi(A)\le 2\,\Phi(D)$.
\end{proof}

All of the above lemmas serve the purpose of reducing the problem to a simpler class of objects. The next two lemmas will focus on connecting these simplified forms to obtain \Cref{thm - main}. To state the first of these lemmas, we define, for every finite nonempty $S\subseteq I$, the sequence $\bm{e}_S$ which is 0s everywhere, except in $S$, where the entries are equal to $1/\sqrt{|S|}$. 

\begin{lem}\label{lem - norm ineq}
    Let $D$ be a doubly stochastic operator. Then, for every finite nonempty $S$,
    $$
    \|D\bm{e}_S-\bm{e}_S\|_2^2 \leq 2\,\Phi_S(D)
    $$
\end{lem}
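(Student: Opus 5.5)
Expanding the square and computing the two nontrivial terms directly settles the inequality. Since $\|\bm{e}_S\|_2=1$, we have
\[
\|D\bm{e}_S-\bm{e}_S\|_2^2=\|D\bm{e}_S\|_2^2-2\langle D\bm{e}_S,\bm{e}_S\rangle+1 .
\]
Both $\|D\bm{e}_S\|_2^2$ and $\langle D\bm{e}_S,\bm{e}_S\rangle$ will be expressed through the row sums $t_k:=\sum_{j\in S}d_{kj}\in[0,1]$ restricted to $S$. (These are row sums over $S$, unlike the column-type sums used in \Cref{lem - self-adjoint}(ii).) Because $(D\bm{e}_S)_k=t_k/\sqrt{|S|}$, we get
\[
\langle D\bm{e}_S,\bm{e}_S\rangle=\frac{1}{|S|}\sum_{k\in S}t_k=\frac{1}{|S|}\sum_{i,j\in S}d_{ij}=1-\Phi_S(D),
\]
where the last equality is the row-sum identity already used in \eqref{eq - dual}, applied to a single $S$ rather than to the infimum.

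Next we bound $\|D\bm{e}_S\|_2^2=\frac{1}{|S|}\sum_{k\in I}t_k^2$. Since $0\le t_k\le1$, we have $t_k^2\le t_k$, so $\sum_k t_k^2\le\sum_{k\in I}t_k$. By Tonelli and column stochasticity, $\sum_{k\in I}t_k=\sum_{j\in S}\sum_{k\in I}d_{kj}=|S|$. This gives $\|D\bm{e}_S\|_2^2\le1$, which is consistent with \cite[Theorem 2.2]{Eshkaftaki}.

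Substituting both facts into the expansion gives
\[
\|D\bm{e}_S-\bm{e}_S\|_2^2\le 1-2(1-\Phi_S(D))+1=2\Phi_S(D).
\]
No step here is a genuine obstacle; the proof is a short computation. The only points needing care are three small checks:
\begin{itemize}
\item the inner product is real, because all entries are nonnegative;
\item the interchange of sums is justified, by Tonelli and nonnegativity;
\item $\Phi_S(D)$ here means the single-set quantity $\frac{1}{|S|}\sum_{i\in S}\sum_{j\notin S}d_{ij}$, so that $1-\Phi_S(D)=\frac{1}{|S|}\sum_{i,j\in S}d_{ij}$ holds exactly by row stochasticity.
\end{itemize}
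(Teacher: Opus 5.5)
Your proof is correct and is essentially the paper's argument with different bookkeeping. Expanding the square globally gives exactly the expression $\frac{1}{|S|}\bigl[\sum_{k\in S}(1-t_k)^2+\sum_{k\notin S}t_k^2\bigr]$ that the paper writes coordinatewise. Your bound $t_k^2\le t_k$ is, term by term, the paper's $(1-t_k)^2\le 1-t_k$ on $S$ and $t_k^2\le t_k$ off $S$, and your column-sum identity $\sum_{k\in I}t_k=|S|$ plays the role of the paper's identity that the mass leaving $S$ equals the mass entering $S$.

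One small aside: the $t_k$ in \Cref{lem - self-adjoint}(ii) are the same row sums $\sum_{i\in S}d_{ki}$ as yours. Your parenthetical contrasting them with ``column-type sums'' is therefore inaccurate, though it plays no role in the argument.
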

\begin{proof}
It is clear that $\|\bm{e}_S\|_2=1$. Moreover, the $k$th entry of $D\bm{e}_S$ is
$$
(D\bm{e}_S)_k = \frac{1}{\sqrt{|S|}}\sum_{j\in S} d_{kj}.
$$
Hence,
\begin{align*}
    (D\bm{e}_S)_k - (\bm{e}_S)_k &=
    \begin{cases}
        -\frac{1}{\sqrt{|S|}}\sum_{j\notin S} d_{kj} \quad &\text{if } k\in S,\\
        \frac{1}{\sqrt{|S|}}\sum_{j\in S} d_{kj} \quad &\text{if } k\notin S,
    \end{cases}
\end{align*}
since $\sum_{j\in S} d_{kj}+\sum_{j\notin S} d_{kj} = 1$, for any $k\in I$. Therefore,
\begin{align*}
    \|D\bm{e}_S-\bm{e}_S\|_2^2 = \frac{1}{|S|} \sum_{i\in S} \bigg(\sum_{j\notin S} d_{ij} \bigg)^{\!2} +  \frac{1}{|S|} \sum_{i\notin S} \bigg(\sum_{j\in S} d_{ij} \bigg)^{\!2}.
\end{align*}
Since we have both $0\leq \sum_{j\in S} d_{ij} \leq \sum_{j\in I} d_{ij} = 1$ and $0\leq \sum_{j\notin S} d_{ij} \leq \sum_{j\in I} d_{ij} = 1$, it follows that
\begin{align*}
    \|D\bm{e}_S-\bm{e}_S\|_2^2 &= \frac{1}{|S|} \sum_{i\in S} \bigg(\sum_{j\notin S} d_{ij} \bigg)^{\!2} +  \frac{1}{|S|} \sum_{i\notin S} \bigg(\sum_{j\in S} d_{ij} \bigg)^{\!2} \\
    &\leq  \frac{1}{|S|} \sum_{i\in S} \sum_{j\notin S} d_{ij}  +  \frac{1}{|S|} \sum_{i\notin S} \sum_{j\in S} d_{ij} = \frac{2}{|S|} \sum_{i\in S} \sum_{j\notin S} d_{ij},
\end{align*}
where the last identity follows since $D$ is doubly stochastic. Indeed,
\begin{align*}
    \sum_{i\notin S} \sum_{j\in S} d_{ij} &= \sum_{i\in I} \sum_{j\in S} d_{ij} - \sum_{i\in S} \sum_{j\in S} d_{ij} = |S| - \sum_{i\in S} \sum_{j\in S} d_{ij}\\
    &= |S| - \sum_{i\in S} \sum_{j\in I} d_{ij} + \sum_{i\in S} \sum_{j\notin S} d_{ij} = |S| - |S| + \sum_{i\in S} \sum_{j\notin S} d_{ij} \\
    &= \sum_{i\in S} \sum_{j\notin S} d_{ij}.
\end{align*}
The conclusion then follows from the fact that $ \Phi_S(D) = \frac{1}{|S|} \sum_{i\in S} \sum_{j\notin S} d_{ij}$.
\end{proof}

This last lemma is essential and makes use of the Cheeger-like inequality.

\begin{lem}\label{lem - Cheeger}
Let $D$ be a doubly stochastic operator and set $A=D^*D$. Then
\[
\|D\|_{\ell^2(I)\to\ell^2(I)}^2 = \|A\|_{\ell^2(I)\to\ell^2(I)}
\leq
\sqrt{1-\Phi^2(A)}.
\]
\end{lem}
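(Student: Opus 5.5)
The first equality is \Cref{lem - self-adjoint}(i), so it remains to show $\|A\|_{\ell^2(I)\to\ell^2(I)}\le\sqrt{1-\Phi^2(A)}$. By \Cref{lem - self-adjoint}, $A=D^*D$ is a positive self-adjoint doubly stochastic operator. Hence $\|A\|=\sup_{\|\bm{x}\|_2=1}\langle A\bm{x},\bm{x}\rangle$. Since $A$ has nonnegative entries, $|\langle A\bm{x},\bm{x}\rangle|\le\langle A|\bm{x}|,|\bm{x}|\rangle$, so we may restrict to nonnegative real $\bm{x}$. By density we may also take $\bm{x}$ finitely supported, which makes every sum below finite and every superlevel set $S_\alpha$ in \Cref{Lawler & Sokal} finite. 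The aim is to show $\langle A\bm{x},\bm{x}\rangle^2\le(1-\Phi^2(A))\|\bm{x}\|_2^4$ for such $\bm{x}$.

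The key step combines \Cref{Lawler & Sokal} with Cauchy--Schwarz, as in the classical Cheeger argument. Factor $|x_i^2-x_j^2|=|x_i-x_j|(x_i+x_j)$ and apply Cauchy--Schwarz with weights $a_{ij}$:
\[
\Bigl(\sum_{i,j}a_{ij}|x_i^2-x_j^2|\Bigr)^2\le\Bigl(\sum_{i,j}a_{ij}(x_i-x_j)^2\Bigr)\Bigl(\sum_{i,j}a_{ij}(x_i+x_j)^2\Bigr).
\]
Symmetry of $A$ and the row and column sums being $1$ give
\[
\sum_{i,j}a_{ij}(x_i\mp x_j)^2=2\|\bm{x}\|_2^2\mp2\langle A\bm{x},\bm{x}\rangle.
\]
Write $\|\bm{x}\|_2=1$ and $t=\langle A\bm{x},\bm{x}\rangle$. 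Then the right-hand side of the Cauchy--Schwarz bound equals $4(1-t)(1+t)=4(1-t^2)$. By \Cref{Lawler & Sokal} the left-hand side is at least $4\Phi^2(A)$. Hence $\Phi^2(A)\le 1-t^2$, that is, $t\le\sqrt{1-\Phi^2(A)}$.

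Taking the supremum over unit vectors gives the claim. The main obstacle is justifying the use of \Cref{Lawler & Sokal} and the reduction to nonnegative, finitely supported vectors. In that lemma the infimum defining $\Phi(A)$ ranges over finite sets, so the superlevel sets $S_\alpha$ must be finite. This holds for finitely supported $\bm{x}$, and indeed for any $\bm{x}\in\ell^2$ when $\alpha>0$. We also need that the supremum of the quadratic form is unchanged under both restrictions: the entrywise absolute-value bound handles the first, and continuity of the form together with density of finitely supported sequences handles the second.
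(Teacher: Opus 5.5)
Your proof is correct and follows essentially the same route as the paper. Both reduce to nonnegative vectors via $\langle A\bm{x},\bm{x}\rangle\le\langle A|\bm{x}|,|\bm{x}|\rangle$, combine the Cheeger-type coarea lemma with Cauchy--Schwarz applied to $|x_i^2-x_j^2|=|x_i-x_j|(x_i+x_j)$, and use $\sum_{i,j}a_{ij}(x_i\mp x_j)^2=2\|\bm{x}\|_2^2\mp 2\langle A\bm{x},\bm{x}\rangle$. Your extra reduction to finitely supported vectors is harmless but not needed, since, as you note yourself, the superlevel sets $S_\alpha$ are already finite for every $\bm{x}\in\ell^2(I)$ and $\alpha>0$.
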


\begin{proof}
Since $A=D^*D$ has nonnegative entries, is self-adjoint and is positive
as an operator, we have
\[
\|A\|_{\ell^2\to\ell^2}
=
\sup_{\|\bm{x}\|_2=1}\langle A\bm{x},\bm{x}\rangle .
\]
Moreover, because the entries of $A$ are nonnegative,
\[
\langle A\bm{x},\bm{x}\rangle
\leq
\langle A|\bm{x}|,|\bm{x}|\rangle .
\]
Therefore, it is enough to prove the desired estimate for vectors
$\bm{x}\in\ell^2(I)$ with $x_i\geq 0$ for all $i\in I$.

Fix such a vector $\bm{x}\neq 0$, and write
\[
N:=\|\bm{x}\|_2^2,
\qquad\text{and}\qquad
C:=\langle A\bm{x},\bm{x}\rangle .
\]
Using \Cref{Lawler & Sokal}, we have
\[
\sum_{i,j\in I} a_{ij}|x_i^2-x_j^2|
\geq 2\,\Phi(A)\,N.
\]
On the other hand, since
\[
|x_i^2-x_j^2|=|x_i-x_j||x_i+x_j|,
\]
the Cauchy--Schwarz inequality gives
\[
\left(\sum_{i,j\in I}a_{ij}|x_i^2-x_j^2|\right)^{\!2}
\leq
\left(\sum_{i,j\in I}a_{ij}(x_i-x_j)^2\right)
\left(\sum_{i,j\in I}a_{ij}(x_i+x_j)^2\right).
\]
We now compute the two factors exactly. Since $A$ is self-adjoint and
doubly stochastic,
\[
\sum_{i,j\in I}a_{ij}(x_i-x_j)^2
=
2N-2C
=
2(N-C),
\]
and similarly,
\[
\sum_{i,j\in I}a_{ij}(x_i+x_j)^2
=
2N+2C
=
2(N+C).
\]
Combining these estimates thus yields
\[
4\Phi^2(A)N^2
\leq
4(N-C)(N+C)
=
4(N^2-C^2).
\]
Hence
\[
C^2\leq \bigl(1-\Phi^2(A)\bigr)N^2.
\]
Since $C=\langle A\bm{x},\bm{x}\rangle\geq 0$, we obtain
\[
\frac{\langle A\bm{x},\bm{x}\rangle}{\|\bm{x}\|_2^2}
\leq
\sqrt{1-\Phi^2(A)}.
\]
Taking the supremum over all nonzero $\bm{x}\in\ell^2(I)$ gives
\[
\|A\|_{\ell^2\to\ell^2}
\leq
\sqrt{1-\Phi^2(A)}.
\]
Finally, \Cref{lem - self-adjoint} ensures that $\|A\|_{\ell^2\to\ell^2} = \|D\|_{\ell^2\to\ell^2}^2$, and the conclusion follows directly.
\end{proof}


\section{Proof and refinements} \label{S:proof-main}

\subsection{Proof of \texorpdfstring{\Cref{thm - main}}{Theorem 1}}

We are now finally ready to provide a proof of our main theorem.

\begin{proof}[Proof of \Cref{thm - main}]
By \Cref{lem - p=2}, we can only consider the case $p=2$. For simplicity, we also write $A:=D^*D$, which is a doubly stochastic operator.

First assume that $\Theta(A)=1$. By the duality expressed in \eqref{eq - dual}, this is equivalent to having $\Phi(A)=0$. Then \Cref{lem - norm ineq} ensures that
$$
\|A\bm{e}_S-\bm{e}_S\|_2^2 \leq 2 \,\Phi_S(A).
$$
Additionally, observe that
$$
\|A\bm{e}_S-\bm{e}_S\|_2^2 \geq \big( \|\bm{e}_S\|_2-\|A\bm{e}_S\|_2\big)^2 = \big( 1-\|A\bm{e}_S\|_2\big)^2 \geq \big( 1-\|A\|_{\ell^2(I)\to\ell^2(I)}\big)^2\!,
$$
since $ 0\leq \|A\bm{e}_S\|_2\leq \|A\|_{\ell^2(I)\to\ell^2(I)} \leq 1$ and $\|\bm{e}_S\|_2=1$. Therefore, we have
$$
\big( 1-\|A\|_{\ell^2(I)\to\ell^2(I)}\big)^2 \leq 2\,\Phi_S(A).
$$
Taking the infimum over $S\subseteq I$, it follows that
$$
\big( 1-\|A\|_{\ell^2(I)\to\ell^2(I)}\big)^2 \leq 2\,\Phi(A) = 0,
$$
which means that $\|A\|_{\ell^2(I)\to\ell^2(I)}=1$. Lastly, by \Cref{lem - self-adjoint}, we have $$\|D\|_{\ell^2(I)\to\ell^2(I)}^2=\|A\|_{\ell^2(I)\to\ell^2(I)}=1,$$ as desired.

Conversely, assume that $\|D\|_{\ell^2(I)\to\ell^2(I)} =1$. Then \Cref{lem - Cheeger} ensures that
\[
1=\|D\|_{\ell^2(I)\to\ell^2(I)}^4
\leq
1-\Phi^2(A) \leq 1.
\]
Therefore, we have $1\leq
1-\Phi^2(A) \leq 1$, which implies that $\Phi(A)=0$. From the duality between $\Phi(A)$ and $\Theta(A)$, this is then equivalent to $\Theta(A) =1$, which finally concludes the proof.
\end{proof}

\subsection{A quantitative refinement}

The preceding argument gives more than the unit norm characterization. It also provides quantitative bounds for the $\ell^2$-norm. In particular, if $A=D^*D$, then
\[
\Theta(A)
\leq
\|D\|_{\ell^2(I)\to\ell^2(I)}^2
\leq
\sqrt{1-(1-\Theta(A))^2}.
\]
The lower bound follows directly by testing $A$ on the normalized indicators $\bm{e}_S=|S|^{-1/2}\mathbf 1_S$, while the upper bound is precisely the bound in \Cref{lem - Cheeger}.

Although the quantity $\Theta(A)$ does not determine the exact value of $\|D\|_{\ell^2\to\ell^2}$, the sequence $\Theta(A^n)$ does. Indeed, the following shows that the Gelfand-type formula holds for doubly stochastic operator using $\Theta(\cdot)$ instead of a norm.

\begin{prop}
Let $D$ be a doubly stochastic operator on $\ell^2(I)$, and set
$A=D^*D$. Then
\[
\|D\|_{\ell^2(I)\to\ell^2(I)}^2
=
\sup_{n\geq 1}\Theta(A^n)^{1/n}.
\]
Moreover, for every $n\geq 1$,
\[
\Theta(A^n)^{1/n}
\leq
\|D\|_{\ell^2(I)\to\ell^2(I)}^2
\leq
\left(2\,\Theta(A^n)-\Theta(A^n)^2\right)^{1/2n}.
\]
\end{prop}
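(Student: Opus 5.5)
We apply the two-sided estimate already obtained in the quantitative refinement, but to the operator $A^n$ in place of $A$. First we check that $A^n$ has the right structure. Since $A=D^*D$ is a self-adjoint doubly stochastic operator (\Cref{lem - self-adjoint}), $A^n$ has nonnegative entries and constant row and column sums equal to $1$ (Tonelli, exactly as in \Cref{lem - self-adjoint}). It is therefore a self-adjoint doubly stochastic operator, and it is positive because $A\ge 0$. Moreover $\|A^n\|_{\ell^2\to\ell^2}=\|A\|_{\ell^2\to\ell^2}^n=\|D\|^{2n}_{\ell^2\to\ell^2}$, by the spectral theorem for the positive operator $A$ (or simply from $\|T^2\|=\|T\|^2$ for self-adjoint $T$, together with monotonicity in $n$).

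For the lower bound we test $A^n$ on $\bm{e}_S$. We have
\[
\langle A^n\bm{e}_S,\bm{e}_S\rangle=\frac{1}{|S|}\sum_{i,j\in S}(A^n)_{ij}\le \|A^n\|_{\ell^2\to\ell^2}=\|D\|^{2n}.
\]
Taking the supremum over finite $S$ gives $\Theta(A^n)\le \|D\|^{2n}$, that is, $\Theta(A^n)^{1/n}\le\|D\|^2$.

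For the upper bound we want to apply \Cref{lem - Cheeger} with $A^n$ in place of $A$. The proof of that lemma only used that the operator is self-adjoint, doubly stochastic and positive, and all three hold for $A^n$ by the first step. It therefore gives $\|A^n\|\le\sqrt{1-\Phi^2(A^n)}$. By the duality \eqref{eq - dual}, $\Phi(A^n)=1-\Theta(A^n)$, so
\[
1-\Phi^2(A^n)=1-(1-\Theta(A^n))^2=2\Theta(A^n)-\Theta(A^n)^2.
\]
Hence $\|D\|^{2n}\le(2\Theta(A^n)-\Theta(A^n)^2)^{1/2}$, and taking $n$-th roots gives the stated upper bound.

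Finally we derive the Gelfand-type formula. The lower bound gives $\sup_n\Theta(A^n)^{1/n}\le\|D\|^2$. For the reverse inequality, write $\theta_n=\Theta(A^n)\in[0,1]$. Since $2\theta_n-\theta_n^2\le 2\theta_n$, the upper bound gives
\[
\|D\|^2\le (2\theta_n)^{1/2n}=2^{1/2n}\,\theta_n^{1/2n}.
\]
If $\|D\|=0$ there is nothing to prove. Otherwise
\[
\theta_n^{1/n}\ge \|D\|^4\,2^{-1/n},
\]
so $\liminf_n\theta_n^{1/n}\ge\|D\|^4$. This is weaker than what we need, because of the square root.

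The main obstacle is exactly this loss of the exponent $1/2$. To remove it, apply the upper bound to $A^{nm}$ for $m$ large rather than to $A^n$. Along the subsequence $N=nm$ one has $\|D\|^{2}\le 2^{1/2N}\theta_N^{1/2N}$, which only yields $\theta_N^{1/N}\gtrsim \|D\|^4$ again, so this alone does not close the gap. We instead use the lower-bound mechanism with near-extremal vectors. Fix $\varepsilon>0$ and a finitely supported nonnegative unit vector $\bm{x}$ with $\langle A\bm{x},\bm{x}\rangle\ge\|A\|-\varepsilon$. By the spectral theorem, $\langle A^n\bm{x},\bm{x}\rangle\ge\langle A\bm{x},\bm{x}\rangle^n$ (Jensen for the spectral measure of $\bm{x}$). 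Then we compare $\langle A^n\bm{x},\bm{x}\rangle$ with $\Theta(A^n)$ via the coarea/layer-cake decomposition of $\bm{x}$ into level sets $S_\alpha$, in the spirit of \Cref{Lawler & Sokal}. This should give $\langle A^n\bm{x},\bm{x}\rangle\le c(\bm{x})\,\Theta(A^n)$ with $c(\bm{x})$ independent of $n$ (for instance bounded via the finite support size of $\bm{x}$, using the bound $\sum_{i,j\in S}(A^n)_{ij}\le |S|\Theta(A^n)$ on each of the finitely many level sets). Taking $n$-th roots, $\limsup_n\Theta(A^n)^{1/n}\ge\|A\|-\varepsilon$, which closes the argument. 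Making this comparison constant uniform in $n$ is the step I expect to require the most care.
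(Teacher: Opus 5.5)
Your handling of the two inequalities matches the paper: you check that $A^n$ is positive, self-adjoint and doubly stochastic, test on $\bm{e}_S$ for the lower bound, and apply \Cref{lem - Cheeger} to $A^n$ with the duality \eqref{eq - dual} for the upper bound. For the exact formula you take a genuinely different route, and it is correct. The paper compresses to finite blocks $A_S=P_SAP_S$ and invokes Perron--Frobenius, together with $A_S\geq 0$, to write $\|A_S\|=\lim_n\langle A_S^n\bm{e}_S,\bm{e}_S\rangle^{1/n}$. It then uses the entrywise domination $(P_SAP_S)^n\le P_SA^nP_S$. You instead take a finitely supported unit vector $\bm{x}$ with $\langle A\bm{x},\bm{x}\rangle\ge\|A\|-\varepsilon$ and lift it to all powers by Jensen on the spectral measure, which is legitimate since $\sigma(A)\subseteq[0,\infty)$. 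The comparison step you flagged as delicate needs no layer-cake argument. Let $S$ be the support of $\bm{x}$. Since $A^n$ has nonnegative entries and $|x_ix_j|\le\|\bm{x}\|_\infty^2\le 1$,
\[
\bigl(\|A\|-\varepsilon\bigr)^n\le\langle A\bm{x},\bm{x}\rangle^n\le\langle A^n\bm{x},\bm{x}\rangle\le\sum_{i,j\in S}(A^n)_{ij}\le |S|\,\Theta(A^n).
\]
Hence $\Theta(A^n)^{1/n}\ge(\|A\|-\varepsilon)\,|S|^{-1/n}$ for every $n$, and the factor $|S|$, which does not depend on $n$, disappears in the limit. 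This even shows that $\Theta(A^n)^{1/n}\to\|D\|^2$, not just that the supremum equals it. Your argument trades Perron--Frobenius and the finite-dimensional limit for the spectral theorem on $\ell^2(I)$ plus a crude support-size bound. The paper's argument stays inside finite matrices and works directly with the uniform vector $\bm{e}_S$, at the price of needing the Perron vector to see $\bm{e}_S$. You can delete the detour through $\theta_n^{1/n}\ge\|D\|^4 2^{-1/n}$ and $A^{nm}$.
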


\begin{proof}
Since $A$ is positive, self-adjoint and doubly stochastic, the same is true
of $A^n$ for every $n\geq 1$. Also,
\[
\|A^n\|_{\ell^2\to\ell^2}
=
\|A\|_{\ell^2\to\ell^2}^n
=
\|D\|_{\ell^2\to\ell^2}^{2n}.
\]
Applying the lower bound to $A^n$ gives
\[
\Theta(A^n)
\leq
\|A^n\|_{\ell^2\to\ell^2}
=
\|D\|_{\ell^2\to\ell^2}^{2n},
\]
and therefore
\[
\Theta(A^n)^{1/n}
\leq
\|D\|_{\ell^2\to\ell^2}^2.
\]
Applying the improved upper bound to $A^n$ gives
\[
\|A^n\|_{\ell^2\to\ell^2}
\leq
\sqrt{1-(1-\Theta(A^n))^2}.
\]
Since $\|A^n\|=\|D\|^{2n}$, this yields
\[
\|D\|_{\ell^2\to\ell^2}^2
\leq
\left(1-(1-\Theta(A^n))^2\right)^{1/2n}
=
\left(2\,\Theta(A^n)-\Theta(A^n)^2\right)^{1/2n}.
\]

It remains to prove the exact formula. Let $P_S$ denote the projection onto
a finite subset $S\subseteq I$, and write
\[
A_S=P_SAP_S.
\]
Since finitely supported vectors are dense in $\ell^2(I)$ and $A$ is
positive self-adjoint, we have
\[
\|A\|_{\ell^2\to\ell^2}
=
\sup_{S\subseteq I,\ |S|<\infty}
\|A_S\|_{\ell^2(S)\to\ell^2(S)}.
\]
Sor fixed $S$, the finite matrix $A_S$ is nonnegative and self-adjoint.
By the Perron--Frobenius theorem,
\[
\|A_S\|_{\ell^2(S)\to\ell^2(S)}
=
\lim_{n\to\infty}
\langle A_S^n \bm{e}_S,\bm{e}_S\rangle^{1/n},
\]
where $\bm{e}_S=|S|^{-1/2}\mathbf{1}_S$. Since $A$ has nonnegative entries,
\[
(P_SAP_S)^n\leq P_SA^nP_S
\]
entrywise. Hence
\[
\langle A_S^n \bm{e}_S,\bm{e}_S\rangle
\leq
\langle A^n \bm{e}_S,\bm{e}_S\rangle
=
\frac1{|S|}\sum_{i,j\in S}(A^n)_{ij}
\leq
\Theta(A^n).
\]
Taking $n$-th roots and then passing to the limit gives
\[
\|A_S\|_{\ell^2(S)\to\ell^2(S)}
\leq
\sup_{n\geq 1}\Theta(A^n)^{1/n}.
\]
Taking the supremum over all finite $S$ yields
\[
\|A\|
\leq
\sup_{n\geq 1}\Theta(A^n)^{1/n}.
\]
The reverse inequality follows from
\[
\Theta(A^n)\leq \|A^n\|=\|A\|^n.
\]
Therefore
\[
\|A\|=\sup_{n\geq 1}\Theta(A^n)^{1/n}.
\]
Since $\|A\|=\|D\|^2$, the result follows.
\end{proof}

\subsection{Relation between \texorpdfstring{$\Theta(D)$}{ϴ(D)} and \texorpdfstring{$\Theta(D^*D)$}{ϴ(D*D)}}

If $D$ is symmetric, then the proof of \Cref{thm - main} can be applied directly to $D$ instead of $D^*D$ and the statement simplifies to $\|D\|_{\ell^2\to\ell^2}=1$ if and only if $\Theta(D)=1$. This naturally raises the question of relating $\Theta(D)$ and $\Theta(D^*D)$. The second part of \Cref{lem - self-adjoint} shows that $1-\Theta(D^*D) \leq 2(1-\Theta(D))$, from which it follows that if $\Theta(D)=1$, then $\Theta(D^*D)$ is also equal to 1.

One may expect that the converse may hold in general. However, this is not the case, as the following example shows.

\begin{example}
Let $T=(I,E)$ be an infinite $4$-regular tree. Choose an orientation of the edges of $T$ such that every vertex has exactly two outgoing edges and two incoming edges. This orientation can be constructed recursively after rooting the tree at an arbitrary vertex.
	
Define a matrix $D=(d_{ij})_{i,j\in I}$ by
$$
d_{ij} 
=
\begin{cases}
\frac{1}{2}, & \text{if } i\to j \text{ is an oriented edge of }T,\\
0, & \text{otherwise.}
\end{cases}
$$
Since every vertex has two outgoing edges and two incoming edges, every row and every column of $D$ sum to $1$. Hence $D$ is doubly stochastic.
	
We first compute an upper bound for $\Theta(D)$. For every finite set $S\subseteq I$,
$$
\sum_{i,j\in S}d_{ij}
=
\frac{1}{2} \,\times\,\#\{(i,j)\in S\times S: i\to j\}.
$$
Since $T$ is a tree, the number of edges of $T$ with both endpoints in $S$ is at most $|S|-1$. Therefore
$$
\sum_{i,j\in S}d_{ij}
\leq
\frac{|S|-1}{2},
$$
and so
$$
\frac{1}{|S|}\sum_{i,j\in S}d_{ij}
\leq
\frac{|S|-1}{2|S|}
<
\frac{1}{2}.
$$
Thus, $\Theta(D)\leq 1/2$.

Now set $A=D^*D$. Since $D$ has real entries, we have
$$
a_{ij}
=
\sum_{k\in I}d_{ki}d_{kj}.
$$
In particular, since every vertex has two incoming edges,
$$
a_{ii}
=
2\left(\frac{1}{2}\right)^{\!2}
=
\frac{1}{2}.
$$
Moreover, if two distinct vertices $i$ and $j$ have a common predecessor, then
$$
a_{ij}= \frac{1}{4}.
$$
	
We construct an infinite sequence of distinct vertices
$$
i_0,i_1,i_2,\dots
$$
such that each pair $i_m,i_{m+1}$ has a common predecessor. Choose any vertex $i_0$. Let $p_0$ be one of its predecessors, and let $i_1\neq i_0$ be the other outgoing neighbor of $p_0$. Then $i_0$ and $i_1$ have the common predecessor $p_0$.
	
Suppose $i_m$ has been constructed and $p_{m-1}\to i_m$. Since $i_m$ has two predecessors, choose the predecessor $p_m\neq p_{m-1}$. Let $i_{m+1}\neq i_m$ be the other outgoing neighbor of $p_m$. Then $i_m$ and $i_{m+1}$ have the common predecessor $p_m$. Since the underlying graph is a tree, this construction never returns to a previously chosen vertex. Now, set
$$
S_n=\{i_0,i_1,\dots,i_{n-1}\}.
$$
For each $m=0,\dots,n-2$, the vertices $i_m$ and $i_{m+1}$ have a common predecessor, so
$$
a_{i_m,i_{m+1}} = \frac{1}{4}
\qquad\text{and}\qquad
a_{i_{m+1},i_m} = \frac{1}{4}.
$$
Therefore,
\begin{align*}
\sum_{i,j\in S_n}a_{ij}
&\geq
\sum_{m=0}^{n-1} a_{i_m,i_m}
+
\sum_{m=0}^{n-2}
\bigl(a_{i_m,i_{m+1}}+a_{i_{m+1},i_m}\bigr) = \frac{n}{2}
+
2(n-1)\frac{1}{4}
=
n-\frac{1}{2}.
\end{align*}
Hence,
$$
\Theta(A)=\sup_{S\subseteq I} \frac{1}{|S|}\sum_{i,j\in S}a_{ij} \geq \frac{1}{|S_n|}\sum_{i,j\in S_n}a_{ij}
\geq
1-\frac{1}{2n}.
$$
Letting $n\to\infty$, we get
$$
\Theta(A)\geq 1.
$$
Since $A$ is doubly stochastic, we also have $\Theta(A)\leq 1$. Therefore $\Theta(A)=1$.

We have therefore constructed a doubly stochastic matrix $D$ such that
$$
\Theta(D)\leq \frac{1}{2}
\qquad\text{and}\qquad
\Theta(D^*D)=1.
$$
Consequently, the implication $\Theta(D^*D)=1 \Rightarrow \Theta(D)$ does not hold in general.
\end{example}

\section{Concluding remarks}

We proved that, for $1<p<\infty$, an infinite doubly stochastic matrix $D$ satisfies
$$
\|D\|_{\ell^p(I)\to\ell^p(I)}=1
\quad\text{ if and only if }\quad
\Theta(D^*D)=1.
$$
Thus, the quantity $\Theta(D^*D)$ characterizes exactly when the finite-dimensional norm identity persists in the infinite setting. 
We also observed that the one-step quantity $\Theta(D^*D)$ yields quantitative bounds on the $\ell^2$-norm. While $\Theta(D^*D)$ alone does not determine the exact norm in general, the sequence $\Theta((D^*D)^n)$ recovers it through a spectral-radius formula.

The proof elegantly uses a generalized Cheeger inequality to relate the $\ell^2$-operator norm to the escape of mass from finite subsets. Cheeger-type inequalities such as this one have also proved useful more broadly in operator theory, notably in the study of spectral gaps for Markov operators and Markov processes, as well as in related questions for positive operators and semigroups. This suggests that the present characterization could fit into a wider connection between isoperimetric quantities and spectral properties of positive operators. Exploring this connection further may be a natural direction for future work.





\bibliographystyle{elsarticle-num-names}

\bibliography{ref}

\end{document}